\documentclass{amsart}
\usepackage{amssymb,amsmath,amsbsy}
\usepackage{url}
\usepackage{booktabs}
\usepackage{tikz}
\usepackage{nicefrac}
\usetikzlibrary{shapes.misc}

\newcommand{\ZZ}{\mathbb{Z}}
\newcommand{\FF}{\mathbb{F}}
\newcommand{\Fp}{\FF_p}
\newcommand{\Fptwo}{\FF_{p^2}}

\newcommand{\Fq}{\FF_q}

\newcommand{\inkron}[2]{\genfrac {(}{)}{0.9pt}{}{#1}{#2}}
\newcommand{\M}{\textsf{M}}
\newcommand{\OO}{\mathcal{O}}
\def\llog{\operatorname{llog}}
\newcommand{\logpow}[2]{{\hspace{.7pt}\log\hspace{-1.4pt}{}^{#1}\hspace{-0.25pt}#2}}
\def\disc{\operatorname{disc}}

\def\tr{\operatorname{tr}}

\def\Exp{\textbf{E}}
\newcommand{\idx}[2]{[#1\hspace{1.5pt}\text{\rm :}\hspace{2pt}#2]}

\newtheorem{proposition}{Proposition}
\newtheorem{definition}{Definition}
\newtheorem{corollary}{Corollary}

\newcommand{\algstart}[2]{\smallskip\noindent{\bf Algorithm} #1. \emph{#2}\begin{enumerate}}
\newcommand{\algend}{\end{enumerate}\vspace{4pt}}
\newcommand{\algitem}{\vspace{2pt}\item}

\begin{document}
\title{Identifying supersingular elliptic curves}

\author{Andrew V. Sutherland}
\address{Department of Mathematics, Massachusetts Institute of Technology, Cambridge, Massachusetts 02139}
\email{drew@math.mit.edu}
\thanks{The author received financial support from NSF grant DMS-1115455}

\subjclass[2010]{Primary 11G07 ; Secondary  11Y16, 11G20, 14H52}

\begin{abstract}
Given an elliptic curve $E$ over a field of positive characteristic~$p$, we consider how to efficiently determine whether $E$ is ordinary or supersingular.
We analyze the complexity of several existing algorithms and then present a new approach that exploits structural differences between ordinary and supersingular isogeny graphs.
This yields a simple algorithm that, given $E$ and a suitable non-residue in $\Fptwo$, determines the supersingularity of $E$ in $O(n^3 \logpow{2}{n})$ time and $O(n)$ space, where $n=O(\log p)$.
Both these complexity bounds are significant improvements over existing methods, as we demonstrate with some practical computations.
\end{abstract}

\maketitle

\section{Introduction}
An elliptic curve $E$ over a field $F$ of positive characteristic~$p$ is called \emph{supersingular} if its $p$-torsion subgroup $E[p]$ is trivial; see \cite[\S 13.7]{Husemoller:EllipticCurves} or \cite[\S V.3]{Silverman:EllipticCurves1} for several equivalent definitions.  Otherwise, we say that $E$ is \emph{ordinary}.
Supersingular curves differ from ordinary curves in many ways, and this has practical implications for algorithms that work with elliptic curves over finite fields, such as algorithms for counting points~\cite{Schoof:ECPointCounting2},
generating codes~\cite{Schoof:CodingTheory}, computing endomorphism rings~\cite{Kohel:thesis}, and calculating discrete logarithms~ \cite{Menezes:MOV}.
Given an elliptic curve, one of the first things we might wish to know is whether it is ordinary or supersingular, and we would like to make this distinction as efficiently as possible.

The answer to this question depends only on the isomorphism class of $E$ over~$\bar{F}$, which is characterized by its $j$-invariant $j(E)$.
It is known that $E$ can be supersingular only when $j(E)\in\Fptwo$, thus we may restrict our attention to the case that~$F$ is a finite field $\Fq\subseteq\Fptwo$.
We also recall that $E$ is supersingular if and only if $\#E(\Fq)\equiv 1 \bmod p$; see \cite{Silverman:EllipticCurves1} for proofs of these facts.

There is a simple Monte Carlo test that quickly identifies ordinary elliptic curves.
When $q=p$, one picks a random point~$P$ on the curve and computes the scalar multiple $(p+1)P$.
If $(p+1)P\ne 0$ then the curve is ordinary, and if $(p+1)P=0$ then the curve is likely to be supersingular (see \S \ref{subsection:montecarlo} for the case $q=p^2$).
If several repetitions of this test fail to prove that $E$ is ordinary, then it is almost certainly supersingular.
But this approach cannot \emph{prove} that $E$ is supersingular, just as the Miller-Rabin primality test~\cite{Miller:PrimalityTest} cannot prove that an integer is prime.

To prove that $E$ is supersingular, one may verify that \mbox{$\#E(\Fq)\equiv 1\bmod p$} using a point-counting algorithm, such as Schoof's algorithm \cite{Schoof:ECPointCounting1,Schoof:ECPointCounting2}.
With a variant of the SEA algorithm (see \S \ref{subsection:polytime}), this can be accomplished in $O(n^4\llog n)$ time using $O(n^4)$ space, where $n=\log q$.
The computer algebra systems Magma~\cite{Magma} and Sage~\cite{SAGE} both use this approach to identify supersingular curves.

But it is natural to ask whether one can do better.
We show that this is indeed the case, presenting an algorithm that runs in ${O}(n^3\logpow{2}{n})$ time and $O(n)$ space.
Rather than counting points, we rely on structural differences between ordinary and supersingular isogeny graphs.
The resulting algorithm is easy to implement and much faster than methods based on point counting, as may be seen in Table~\ref{table:timings}.

In the first step of the algorithm we must solve a cubic equation, and in each subsequent step we need to solve a quadratic equation.
To obtain a deterministic result, we assume that we are given a quadratic non-residue and a cubic non-residue in~$\Fptwo$ to facilitate these computations.
When~$\Fptwo$ is constructed using a generator, this generator already provides the non-residues we require.
Alternatively, non-residues can be efficiently obtained by sampling random elements, yielding a Las Vegas algorithm.

\section{Existing Algorithms}\label{section:background}

Before presenting the new algorithm, we briefly review some existing methods for testing supersingularity and analyze their complexity.
Over fields of characteristic~$2$ or~$3$, an elliptic curve $E$ is supersingular if and only if $j(E)=0$, a condition that is trivial to check given an equation for the curve.
As noted in the introduction, we may assume that $E$ is defined over $\Fptwo$, since otherwise $E$ is ordinary.
Thus we shall work over a finite field $\Fq$ of characteristic $p > 3$, where $q$ is either $p$ or $p^2$.

We use $\M(n)$ to denote the cost of multiplying two $n$-bit integers, which we may bound by $\M(n)=O(n\log n\llog n)=\tilde{O}(n)$, via \cite{Schonhage:Multiplication}.
All of our bounds are expressed in terms of $n=\log p$, which is proportional to the size of the input for our problem, the coefficients of the curve $E$.

\subsection{Exponential time algorithms}\label{subsection:exptime}

If $E$ is in Weierstrass form $y^2=f(x)$, then $E$ is supersingular if and only if the coefficient of $x^{p-1}$ in $f(x)^{(p-1)/2}$ is zero, and this implies that if $E$ is in Legendre form $y^2=x(x-1)(x-\lambda)$, then $E$ is supersingular if and only if $\sum_{i=0}^m\binom{m}{i}^2\lambda^i = 0$; see \cite[Thm.~4.1]{Silverman:EllipticCurves1}.
These criterion are convenient and easy to state, but they are computationally useful only when $p$ is very small, since the time required to apply them is exponential in $n$.

\subsection{Polynomial time algorithms}\label{subsection:polytime}

Schoof's algorithm \cite{Schoof:ECPointCounting1} computes $\#E(\Fq)$ in $\tilde{O}(n^5)$ time and $O(n^3)$ space.
This immediately yields a deterministic polynomial-time algorithm for testing supersingularity, since $E$ is supersingular if and only if $\#E(\Fq)\equiv 1\bmod p$.
The improvements of Elkies and Atkin incorporated in the SEA algorithm \cite{Elkies:AtkinBirthday,Schoof:ECPointCounting2} are not immediately applicable, since they rely on results that do not necessarily apply to supersingular curves \cite[Prop.~6.1-3]{Schoof:ECPointCounting2}.
However, as remarked by Schoof \cite[p.~241]{Schoof:ECPointCounting2}, supersingular curves can be identified using similar techniques.
Let us briefly fill in the details.

Recall that for any prime $\ell\ne p$, the classical modular polynomial $\Phi_\ell\in\ZZ[X,Y]$ has the property that two $j$-invariants $j_1,j_2\in\Fq$ satisfy $\Phi_\ell(j_1,j_2) = 0$ if and only if $j_1=j(E_1)$ and $j_2=j(E_2)$ for some elliptic curves $E_1$ and $E_2$ related by a cyclic isogeny of degree $\ell$; see \cite[Thm.~12.19]{Lang:EllipticFunctions}.
If $E_1$ and $E_2$ are isogenous, then $\#E_1(\Fq)=\#E_2(\Fq)$, thus $E_1$ is supersingular if and only if $E_2$ is.
Since every supersingular $j$-invariant in characteristic $p$ lies in $\Fptwo$, if $E$ is supersingular then the univariate polynomial $\phi_{\ell,E}(X) = \Phi_\ell(j(E),X)$ splits completely in $\Fptwo[X]$, for every prime $\ell\ne p$.  However, if $E$ is ordinary, this is not the case.

\begin{proposition}\label{prop:modpoly}
Let $j(E)\in\Fptwo$ and assume $j(E)\ne 0,1728$.\footnote{We note that $j(E)=0$ (resp. 1728) is supersingular if and only if $p\not\equiv 1 \bmod 3$ (resp. 4).}
Let $S$ be a set of primes $\ell\ne p$ with product $M>2p$.
Then $E$ is supersingular if and only if $\phi_{\ell,E}$ splits completely in $\Fptwo[X]$ for every $\ell\in S$.
\end{proposition}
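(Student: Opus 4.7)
The plan is to handle the two implications separately. The forward direction is immediate: if $E$ is supersingular, every curve $\ell$-isogenous to $E$ is also supersingular and hence has $j$-invariant in $\Fptwo$, as recalled just before the proposition. Since $\phi_{\ell,E}(X)$ has degree $\ell+1$ and its roots in $\bar{\Fp}$ are exactly the $j$-invariants of the curves $\ell$-isogenous to $E$, it splits completely in $\Fptwo[X]$ for every prime $\ell \ne p$.

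For the reverse direction I assume $E$ is ordinary and seek a contradiction. Since $j(E)\in\Fptwo$, I view $E$ as defined over $\Fptwo$ and let $\pi\in \OO := \End(E)$ be the $p^2$-power Frobenius; then $\OO$ is an order in an imaginary quadratic field $K$ with $\ZZ[\pi]\subseteq \OO \subseteq \OO_K$. Writing $t = \tr\pi$ and $c := [\OO_K : \ZZ[\pi]]$, the discriminant identity $t^2 - 4p^2 = c^2\disc K$ together with the bound $|\disc K|\ge 3$ (valid for any imaginary quadratic field) yields
\[
c^2 \;\le\; \frac{4p^2-t^2}{|\disc K|} \;\le\; \frac{4p^2}{3},
\]
so $c < 2p$.

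The principal step, which I expect to be the main obstacle, is to show that if $\phi_{\ell,E}$ splits completely in $\Fptwo[X]$ then $\ell\mid c$. For this I would invoke Kohel's structure theorem for the $\ell$-isogeny graph of ordinary curves: the connected component containing $j(E)$ is an $\ell$-volcano of height $v_\ell(c)$. The classical identity $\phi_{\ell,E}(X) = \prod_C (X - j(E/C))$, where $C$ ranges over the $\ell+1$ cyclic order-$\ell$ subgroups of $E[\ell]$, shows that complete splitting in $\Fptwo[X]$ is equivalent to $j(E/C)\in\Fptwo$ for every such $C$. But if $v_\ell(c) = 0$ then the volcano reduces to its crater, on which each vertex has at most $1 + \kron{\disc K}{\ell}\le 2$ neighbors in $\Fptwo$; this is strictly fewer than $\ell+1$ for $\ell\ge 2$, contradicting complete splitting. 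Hence $v_\ell(c)\ge 1$, i.e., $\ell\mid c$.

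With this step in hand the conclusion is immediate: since the primes of $S$ are distinct and each divides $c$, their product $M$ divides $c$ as well, so $M\le c<2p$, contradicting the hypothesis $M>2p$.
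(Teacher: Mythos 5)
Your proof is correct and follows essentially the same route as the paper: both hinge on the fact (from the Kohel/Fouquet--Morain volcano structure) that complete splitting of $\phi_{\ell,E}$ for an ordinary curve forces $\ell$ to divide the conductor of $\ZZ[\pi]$ (equivalently $\ell^2\mid t^2-4p^2$), and then combine the Hasse bound with $M>2p$ to reach a contradiction. The only cosmetic difference is that you conclude via $M\mid c$ and $c<2p$, whereas the paper works with $t^2\equiv 4p^2\bmod M^2$ and deduces $t=\pm 2p$, which would make $E$ supersingular.
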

\begin{proof}
The forward implication is addressed by the discussion above.
For the reverse, suppose for the sake of contradiction that $E$ is ordinary and that $\phi_{\ell,E}$ splits completely in $\Fptwo[X]$ for all $\ell\in S$.
It follows from \cite[Thm.~2.1]{Fouquet:IsogenyVolcanoes} (or see \S\ref{sec:isogenies}) that $t^2-4p^2$ is divisible by $\ell^2$, where $t=p^2+1-\#E(\Fptwo)$ is the trace of Frobenius of $E/\Fptwo$.
Thus $t^2\equiv 4p^2\bmod \ell^2$ for each $\ell\in S$, and therefore $t^2\equiv 4p^2\bmod M^2$, by the Chinese Remainder Theorem.
The Hasse bound implies $t^2\le 4p^2$, so we must have $t^2 = 4p^2$, since $M^2 > 4p^2$.
Thus $t=\pm 2p$, and therefore $\#E(\Fptwo)\equiv 1\bmod p$.  But this implies that $E$ is supersingular, which is a contradiction.
\end{proof}

To prove the supersingularity of $E/\Fq$, it is enough to check that $\phi_{\ell,E}$ splits completely in $\Fptwo[X]$ for each of the first $m$ primes $\ell$ with product $M > 2p$.
This can be done without factoring $\phi_{\ell,E}$.  One removes all linear factors from $\phi_{\ell,E}$ as follows: first let $f = \phi_{\ell,E}$ and compute $g = \gcd(f(X),X^{p^2}-X)$, then repeatedly set $f\leftarrow f/g$ and $g\leftarrow \gcd(f,g)$ until $\deg g = 0$.
If at this point $\deg f = 0$, then $\phi_{\ell,E}$ splits completely over $\Fptwo$ and otherwise it does not.
When $j(E)$ lies in $\Fp$, we may instead work in $\Fp[X]$ and remove both linear and quadratic factors from $\phi_{\ell,E}$ with a similar approach.

Using precomputed modular polynomials, this yields a deterministic algorithm that runs in $O(n^2\M(n^2)/\log n)=O(n^4\llog n)$ time and $O(n^4)$ space,
assuming Kronecker substitution \cite[\S 8.4]{Gathen:ComputerAlgebra} is used to multiply polynomials in $\Fptwo[X]$ of degree $O(n)$ in time $O(\M(n^2))$.
The space can be reduced to $O(n^3\log n)$ by computing modular polynomials as required, but this significantly increases the running time.

\subsection{A Monte Carlo algorithm}\label{subsection:montecarlo}

For a supersingular curve $E$ over a field of characteristic $p > 3$ it follows from \cite{Ruck:EllipticCurveGroupStructures} that
\begin{enumerate}
\item[(i)] if $E$ is defined over $\Fp$ then $\#E(\Fp)=p+1$;
\item[(ii)] either $E(\Fptwo)\cong (\ZZ/(p-1)\ZZ)^2$ or $E(\Fptwo)\cong (\ZZ/(p+1)\ZZ)^2$.
\end{enumerate}
This motivates the following algorithm.
\smallskip

\algstart{\textbf{1}}{Given an elliptic curve $E/\Fq$ with $q|p^2$, do the following:}
\algitem {\bf If $\boldsymbol{q=p}$:} pick a random point~$P\in E(\Fp)$ and return {\bf true} if $(p+1)P=0$, otherwise return {\bf false}.
\algitem {\bf If $\boldsymbol{q=p^2}$:} pick a random point~$P\in E(\Fptwo)$ and return {\bf true} if either $(p-1)P=0$ or $(p+1)P=0$, otherwise return {\bf false}.
\algend

If the algorithm returns {\bf false} then $E$ is ordinary.
We now show that if the algorithm returns {\bf true}, then $E$ is very likely to be supersingular (for large $q$).

\begin{proposition}\label{prop:montecarlo}
Given an ordinary elliptic curve $E/\Fq$, Algorithm~1 returns {\bf true} with probability at most $8\sqrt{q}/(\sqrt{q}-1)^2 = O(q^{-1/2})$.
\end{proposition}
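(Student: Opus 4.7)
My plan is to bound the failure probability by reducing to torsion subgroup sizes, then handling the $q = p$ and $q = p^2$ cases separately using the structure of $E(\Fq)$ together with the Hasse bound. Let $N = \#E(\Fq)$, and let $t$ denote the trace of Frobenius of $E/\Fq$; Hasse gives $N \ge (\sqrt q - 1)^2$, so once I bound the relevant torsion by $O(\sqrt q)$ I am done.

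For $q = p$, the probability is exactly $|E(\Fp)[p+1]|/N$. I would write $E(\Fp) \cong \ZZ/a\ZZ \times \ZZ/b\ZZ$ with $a \mid b$ and $a \mid p - 1$, so that $|E(\Fp)[p+1]| = \gcd(a, p+1)\gcd(b, p+1)$. The first factor divides $\gcd(p-1, p+1) = 2$. The second divides $\gcd(N, p+1) = \gcd(t, p+1) \le |t| \le 2\sqrt p$, using ordinariness ($t \ne 0$) and Hasse. Thus $|E(\Fp)[p+1]| \le 4\sqrt p$, and the probability is at most $4\sqrt p/(\sqrt p - 1)^2 \le 8\sqrt q/(\sqrt q - 1)^2$.

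For $q = p^2$, by the union bound the probability is at most $\bigl(|E(\Fptwo)[p-1]| + |E(\Fptwo)[p+1]|\bigr)/N$. The $\gcd(a, \cdot)$ approach is now too weak, since $a \mid p^2 - 1$ allows $\gcd(a, p \pm 1)$ to be as large as $p \pm 1$. Instead, I would bound $|E(\Fptwo)[m]|$ by $\gcd(N, m^2)$, using that this subgroup lies in both $E(\Fptwo)$ (of order $N$) and $E[m] \cong (\ZZ/m\ZZ)^2$. The key calculation is the congruence $N \equiv \pm 2p - t \pmod{(p\mp 1)^2}$, obtained by expanding $p^2 = ((p \mp 1) \pm 1)^2$. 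Since $p \nmid t$ by ordinariness, $\pm 2p - t \ne 0$, and so $\gcd(N, (p \mp 1)^2)$ divides the positive integer $|{\pm 2p - t}|$. Summing the two bounds gives $(2p - t) + (2p + t) = 4p$, so $|E(\Fptwo)[p-1]| + |E(\Fptwo)[p+1]| \le 4p$, and the probability is at most $4p/(p-1)^2 = 4\sqrt q/(\sqrt q - 1)^2$, well within the claimed bound.

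The main obstacle is obtaining a torsion bound sharp enough to give the $O(q^{-1/2})$ decay. The more obvious bound using $\gcd(N, p \pm 1)^2$ degenerates in the exceptional case $t = 2$, where it becomes $(p \pm 1)^2$. Working modulo the squares $(p \pm 1)^2$ instead and leveraging ordinariness to keep $\pm 2p - t$ nonzero is what brings the total back down to $4p$; everything else is Hasse-bounding and bookkeeping.
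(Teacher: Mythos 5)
Your proof is correct, and its key step is genuinely different from the paper's. Both arguments reduce the probability to bounding the relevant torsion subgroups and dividing by $\#E(\Fq)\ge(\sqrt q-1)^2$, but the paper bounds each torsion subgroup by a pigeonhole argument on the Hasse interval: if $\#E(\Fq)[m]$ (or its larger cyclic factor $m_2$, in the $q=p$ case) exceeded the interval's length $4\sqrt q$, then $\#E(\Fq)$ would be forced to coincide with the unique multiple of it in that interval, namely $p+1$ or $(p\mp1)^2$, contradicting ordinariness. You instead compute directly: $\#E(\Fq)[m]$ divides $\gcd(N,m^2)$, and the congruences $N\equiv 2p-t\pmod{(p-1)^2}$ and $N\equiv-(2p+t)\pmod{(p+1)^2}$, together with $p\nmid t$ (hence $0<2p\mp t$ by Hasse), give $\gcd(N,(p\mp1)^2)\le 2p\mp t$, and the two bounds sum to $4p$. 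Your route yields the slightly sharper constant $4\sqrt q$ in place of $8\sqrt q$ (and $4\sqrt p$ in the $q=p$ case via $\gcd(N,p+1)=\gcd(t,p+1)\le|t|\le 2\sqrt p$), and your closing remark about why the naive bound $\gcd(N,p\mp1)^2$ degenerates at $t=2$ correctly identifies the pitfall that forces one to work modulo the squares. The paper's interval argument is shorter and avoids explicit congruences but proves only the weaker constant; both proofs are elementary and both invoke ordinariness in the same essential way, to exclude $t=0$ (for $q=p$) and $t=\pm2p$ (for $q=p^2$).
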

\begin{proof}
First, let $q=p$.
Let $H$ be the $(p+1)$-torsion subgroup $E(\Fq)[p+1]$.
Then $H\cong \ZZ/m_1\ZZ\times \ZZ/m_2\ZZ$, where $m_1$ divides $m_2$ and $q-1$.
Since $m_1$ also divides $p+1$, we have $m_1\le 2$.
We now show $m_2\le 4\sqrt{q}$.  If not, then $p+1$ is the unique multiple of $m_2$ in the Hasse interval $[(\sqrt{q}-1)^2,(\sqrt{q}+1)^2]$.
But then $\#E(\Fp)=p+1$, contradicting the fact that $E$ is ordinary.
Thus $\#H=m_1m_2\le 8\sqrt{q}$.

Now let $q=p^2$.
Let $H$ be the union of $H_1=E(\Fq)[p-1]$ and $H_2=E(\Fq)[p+1]$.
Then $\#H_1\le 4\sqrt{q}$, else $(p-1)^2$ is the unique multiple of $\#H_1$ in the Hasse interval, yielding a contradiction as above.
Similarly, $\#H_2\le 4\sqrt{q}$, and therefore $\#H\le 8\sqrt{q}$.

In both cases, Algorithm~1 outputs ${\bf true}$ only when the random point~$P$ lies in~$H$, which occurs with probability $\#H/\#E(\Fq)\le 8\sqrt{q}/(\sqrt{q}-1)^2$.
\end{proof}

Algorithm~1 is a Monte Carlo algorithm with one-sided error.
For $q\ge 7$ the error probability given by Proposition~\ref{prop:montecarlo} is bounded below 1 and can be made arbitrarily small (but never zero) by repetition.
Using standard techniques, the random point~$P$ can be obtained in $O(n\M(n))=\tilde{O}(n^2)$ expected time, and this also bounds the cost of the scalar multiplications.

\section{Isogeny graphs}\label{sec:isogenies}
As above, we work in a finite field $\Fq$ of characteristic $p > 3$.
For each prime $\ell\ne p$ we define the (directed multi-) graph $G_\ell(\Fq)$ of $\Fq$-rational $\ell$-isogenies.

\begin{definition}
$G_\ell(\Fq)$ is the graph with vertex set $\Fq$ and edges $(j_1,j_2)$ present with multiplicity $k$ whenever $j_2$ is a root of $\Phi_\ell(j_1,X)$ with multiplicity~$k$.
\end{definition}

As in \S \ref{subsection:polytime}, the polynomial $\Phi_\ell\in\ZZ[X,Y]$ is the classical modular polynomial that parametrizes $\ell$-isogenous pairs of $j$-invariants; see \cite[\S 5.2]{Lang:EllipticFunctions}.
It is symmetric and has degree $\ell+1$ in both variables, thus the in-degree and out-degree of each vertex of $G_\ell(\Fq)$ is at most $\ell+1$.
These degrees need not coincide (e.g., for the vertices 0, 1728, and their neighbors); when we speak of the \emph{degree} of a vertex we refer to its out-degree.
We note that $G_\ell(\Fq)$ may contain self-loops, edges of the form $(j_1,j_1)$.

Each vertex of $G_\ell(\Fq)$ is the $j$-invariant $j(E)$ of an elliptic curve $E$ defined over~$\Fq$, and we may classify each vertex as ordinary or supersingular.
We may similarly classify the edges and connected components of $G_\ell(\Fq)$, since every edge lies between vertices of the same type (ordinary or supersingular).
As noted in \S \ref{subsection:polytime}, if $j(E)$ is a supersingular $j$-invariant then the polynomial $\Phi_l(j(E),X)$ splits completely in $\Fptwo[X]$, and it follows that for $q>p$, every supersingular component\footnote{There is in fact only one supersingular component of $G_\ell(\Fptwo)$; see \cite[Cor.~78]{Kohel:thesis}.} of $G_\ell(\Fq)$ is a regular graph of degree $\ell+1$.

However, the ordinary components of $G_\ell(\Fq)$ are \emph{not} regular graphs of degree $\ell+1$; they contain many vertices of degree less than $\ell+1$, and this is the basis of our algorithm.
Given an elliptic curve $E$ defined over $\Fptwo$, our strategy is to search for a vertex of degree less than 3 that is connected to $j(E)$ in $G_2(\Fptwo)$.
If we find such a vertex, then $E$ is ordinary, and if we can prove no such vertex exists, then $E$ is supersingular.
To do this we need to understand the structure of the ordinary components of $G_2(\Fptwo)$.
All the facts we require apply more generally to $G_\ell(\Fq)$, so we continue in this setting.

A detailed analysis of the structure of the ordinary components of $G_\ell(\Fq)$ was undertaken by Kohel in his thesis \cite{Kohel:thesis}, and they are now commonly called $\ell$-\emph{volcanoes}, a term introduced by Fouquet and Morain \cite{Fouquet:IsogenyVolcanoes}.
The structure of an $\ell$-volcano is determined by the relationships between the endomorphism rings of the elliptic curves corresponding to its vertices.
Here we record only the facts we need, referring to \cite{Fouquet:IsogenyVolcanoes,Kohel:thesis} for proofs and a more complete presentation.

Let $j(E)$ be a vertex in an ordinary component $V$ of $G_\ell(\Fq)$ (an $\ell$-volcano).
Recall that the endomorphism ring of an ordinary elliptic curve is isomorphic to an order $\OO$ in an imaginary quadratic field $K$.
We have the inclusions $\ZZ[\pi]\subseteq \OO\subseteq\OO_K$, where $\ZZ[\pi]$ is the order generated by (the image of) the Frobenius endomorphism~$\pi$, and $\OO_K$ is the maximal order of $K$ (its ring of integers).
The order $\OO$ depends only on the isomorphism class $j(E)$, while the orders $\ZZ[\pi]$ and $O_K$ depend only on the isogeny class of $E$ and are invariants of $V$.

We may partition the vertices of $V$ into \emph{levels} $V_0,\ldots,V_d$, where the level $V_i$ in which $j(E)$ lies is determined by the $\ell$-adic valuation $i=\nu_\ell(\idx{\OO_K}{\OO})$, the largest integer $i$ for which $\ell^i$ divides $\idx{\OO_K}{\OO}$.
The integer $d=\nu_\ell\idx{\OO_K}{\ZZ[\pi]}$ is the \emph{depth} (also called the \emph{height}) of $V$, and may be 0.
From the norm equation
\begin{equation}\label{eq:norm}
4q=t^2-v^2D,
\end{equation}
where $q=N(\pi)$, $t=\tr\pi$, $D=\disc(K)$, and $d =\nu_\ell(v)$, we have
\begin{equation}\label{eq:dbound}
d < \log_\ell \sqrt{4q}.
\end{equation}

Level $V_d$ is the \emph{floor} of the $\ell$-volcano $V$.
Its vertices are distinguished by their degree, which is at most 2.
Every other vertex in $V$ (if any) has degree $\ell+1$.

\begin{proposition}\label{prop:volcano}
Let $j(E)$ be a vertex in level $V_i$ of an $\ell$-volcano $V$ of depth $d$.
\begin{enumerate}
\item[(i)] The degree of $j(E)$ is $\ell+1$ if and only if $i < d$.
\item[(ii)] If $i = 0 < d$ then at least $\ell-1$ of the edges from $j(E)$ lead to $V_1$.
\item[(iii)] If $0 < i < d$ then one edge from $j(E)$ leads to $V_{i-1}$ and the rest lead to $V_{i+1}$.
\item[(iv)] If $0 < i = d$ then $j(E)$ has just one outgoing edge and it leads to $V_{d-1}$.
\end{enumerate}
\end{proposition}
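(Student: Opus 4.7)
My plan is to reduce the proposition to Kohel's classification of ordinary $\ell$-isogenies and a count of Frobenius-stable cyclic subgroups of $E[\ell]$. Recall that any $\ell$-isogeny $\phi\colon E\to E'$ between ordinary curves over $\Fq$ falls into one of three mutually exclusive types (\cite[Prop.~21]{Kohel:thesis}): \emph{horizontal}, with $\End(E)=\End(E')$; \emph{ascending}, with $\End(E)\subsetneq\End(E')$ and index $\ell$; or \emph{descending}, with $\End(E')\subsetneq\End(E)$ and index $\ell$. Translating to the level filtration, horizontal edges stay inside $V_i$, ascending edges go from $V_i$ to $V_{i-1}$, and descending edges go from $V_i$ to $V_{i+1}$.

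Next I would count the three kinds of edges leaving a vertex $j(E)\in V_i$. Over $\bar\Fq$ the curve $E$ has exactly $\ell+1$ cyclic subgroups of order $\ell$, yielding $\ell+1$ isogenies, and an edge appears in $G_\ell(\Fq)$ precisely when the corresponding kernel is stable under Frobenius. By a standard count using the decomposition of $(\ell)$ in $\End(E)$, the number of horizontal $\ell$-isogenies is $1+\kron{D_K}{\ell}$ when $\ell\nmid \idx{\OO_K}{\End(E)}$, and $0$ otherwise; in particular horizontal edges occur only at the surface $i=0$. The number of ascending isogenies is $1$ when $\End(E)\ne\OO_K$ (the unique super-order of index $\ell$) and $0$ when $\End(E)=\OO_K$. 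All remaining isogenies from $E$ are descending.

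With these counts in hand, every assertion follows by subtraction from $\ell+1$. At an interior level $0<i<d$, there are no horizontal edges, exactly one ascending edge to $V_{i-1}$, and so $\ell$ descending edges to $V_{i+1}$; this is (iii) and the $i<d$ part of (i). At the surface $i=0<d$, the descending count is $\ell-\kron{D_K}{\ell}\ge\ell-1$, all landing in $V_1$, which is (ii). At the floor $i=d>0$, the ascending edge to $V_{d-1}$ is still present, while each putative descending isogeny would map to a curve whose endomorphism ring does not contain $\ZZ[\pi]$, so such isogenies cannot be defined over $\Fq$ and contribute no edge; this yields (iv) and the $i=d$ direction of (i), including the out-degree bound of $2$ in the $d=0$ edge case via $1+\kron{D_K}{\ell}\le 2$.

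The main obstacle is the last point: justifying that no descending $\ell$-isogeny from a floor vertex is $\Fq$-rational. This is the real content of the depth formula $d=\nu_\ell\idx{\OO_K}{\ZZ[\pi]}$, and amounts to showing that the Frobenius $\pi$ does not act as a scalar on $E[\ell]$ once $\End(E)$ has already absorbed the full $\ell$-power part of the conductor of $\ZZ[\pi]$ in $\OO_K$. Equivalently, using the norm equation \eqref{eq:norm}, one checks that a descending isogeny at level $d$ would force $\ell^{d+1}\mid v$, contradicting the definition of $d$.
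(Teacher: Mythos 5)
The paper does not actually prove this proposition: its ``proof'' is a bare citation of \cite[Thm.~2.1]{Fouquet:IsogenyVolcanoes} and \cite[Prop.~23]{Kohel:thesis}. Your sketch reconstructs precisely the argument contained in those sources --- the horizontal/ascending/descending trichotomy, the count of Frobenius-stable order-$\ell$ subgroups, and subtraction from $\ell+1$ --- and the outline is correct, with the key difficulty (no $\Fq$-rational descending isogeny from the floor) correctly identified. Two points should be tightened. First, your criterion that an ascending isogeny exists ``when $\End(E)\ne\OO_K$'' is wrong as stated: the super-order of index $\ell$ exists precisely when $\ell$ divides the conductor $\idx{\OO_K}{\End(E)}$, i.e.\ when $i>0$; a vertex in $V_0$ may well have $\End(E)\ne\OO_K$ (conductor prime to $\ell$) and admit no ascending $\ell$-isogeny. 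You apply the count correctly at each level, but the stated criterion is not the one you are using. Second, the ``subtraction from $\ell+1$'' at levels $i<d$ silently invokes the converse of the fact you single out as the main obstacle: one must know that when $\ell$ divides $\idx{\End(E)}{\ZZ[\pi]}$ (equivalently $i<d$), Frobenius acts as a scalar on $E[\ell]$, so that \emph{all} $\ell+1$ kernels are $\Fq$-rational and every geometric isogeny contributes an edge. This is the same $\ell$-adic valuation computation as your floor argument and should be stated together with it, since without it the interior and surface degree counts in (i)--(iii) are only upper bounds. (There is also the standing caveat that roots of $\Phi_\ell(j(E),X)$ in $\Fq$ correspond to $\Fq$-rational isogenies only away from $j=0,1728$, which the paper excludes elsewhere.)
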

\begin{proof} See  \cite[Thm.~2.1]{Fouquet:IsogenyVolcanoes} and \cite[Prop.~23]{Kohel:thesis}.
\end{proof}

Given $E/\Fq$, our goal is to either find a path from $j(E)$ to the floor of its $\ell$-volcano in $G_\ell(\Fq)$, or prove that no such path exists.
We define a path as follows.

\begin{definition}
A path (of length $k$) in $G_\ell(\Fq)$ is a sequence of vertices $j_0,j_1,\ldots,j_k$ such that $\Phi_\ell(j_0,j_1)=0$ and $j_{i+2}$ is a root of $\Phi_\ell(j_{i+1},X)/(X-j_i)$ for $0\le i < k-1$.
\end{definition}

In terms of a walk on the graph, this definition prohibits backtracking except when there are multiple edges leading back to the previous vertex.
Edges that lead toward the floor (from level $V_i$ to $V_{i+1}$) are called \emph{descending}.
Proposition \ref{prop:volcano} implies that every vertex of $V$ not on the floor has at least $\ell-1$ descending edges.
Any path that starts with a descending edge can only be extended by descending further, and this must lead to the floor within $d$ steps (this is called a \emph{descending path} in \cite{Fouquet:IsogenyVolcanoes}).

We can summarize these results in purely graph-theoretic terms.
For any edge $(j_0,j_1)$ in $G_\ell(\Fq)$, not necessarily ordinary, let $R_k(j_0,j_1)$ denote the set of vertices~$j_{k}$ for which there exists a path $j_0,j_1,\ldots,j_k$ of length $k$.

\begin{corollary}\label{corollary:paths}
Let $j_0$ be a vertex of $G_\ell(\Fq)$ of degree $\ell+1$.
\begin{enumerate}
\item[(i)] If $j_0$ is ordinary, then $G_\ell(\Fq)$ contains $\ell-1$ edges $(j_0,j_1)$ for which the set $R_k(j_0,j_1)$ is empty for some $1\le k < \log_\ell\sqrt{4q} + 1$.
\item[(ii)] If $j_0$ is supersingular and $q > p$, then for every edge $(j_0,j_1)$ the set $R_k(j_0,j_1)$ is nonempty for all $k\ge 1$.
\end{enumerate}
\end{corollary}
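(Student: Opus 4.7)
The plan is to handle the two parts separately, using Proposition~\ref{prop:volcano} to control paths in the ordinary case and the regularity of supersingular components in the supersingular case.

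For part~(i), since $j_0$ has degree $\ell+1$ and is ordinary, Proposition~\ref{prop:volcano}(i) places $j_0$ at some level $V_i$ with $i<d$, so in particular $d\ge 1$; parts~(ii) and~(iii) of the same proposition exhibit at least $\ell-1$ descending edges from $j_0$ to $V_{i+1}$, and I will show that any one of them already produces the required empty $R_k$. Fix such a descending edge $(j_0,j_1)$ and consider any extension $j_0,j_1,\ldots,j_s$. By induction on $s$ I claim that $j_s\in V_{i+s}$ whenever $s\le d-i$: if $0<i+s<d$, then Proposition~\ref{prop:volcano}(iii) shows $j_s$ has exactly one ascending edge, which is simple and leads to the unique parent $j_{s-1}$, so dividing $\Phi_\ell(j_s,X)$ by $(X-j_{s-1})$ removes precisely that edge and forces $j_{s+1}$ into $V_{i+s+1}$ (the case $i+s=0$ is analogous using part~(ii)). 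After exactly $d-i$ steps the path reaches the floor $V_d$, and by part~(iv) the sole outgoing edge from $j_{d-i}$ leads back simply to $j_{d-i-1}$; hence $\Phi_\ell(j_{d-i},X)/(X-j_{d-i-1})$ has no $\Fq$-roots and $R_{d-i+1}(j_0,j_1)=\emptyset$. Since $1\le d-i+1\le d+1$, the bound (\ref{eq:dbound}) yields $d-i+1<\log_\ell\sqrt{4q}+1$, as required.

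For part~(ii), $j_0$ supersingular together with $q>p$ forces $\Fq=\Fptwo$, and the component of $j_0$ in $G_\ell(\Fq)$ is $(\ell+1)$-regular, so every vertex $j_s$ that the path can visit has all $\ell+1$ roots of $\Phi_\ell(j_s,X)$ in $\Fq$ (counted with multiplicity). Given any path $j_0,\ldots,j_k$, the polynomial $\Phi_\ell(j_k,X)/(X-j_{k-1})$ has degree $\ell\ge 1$ and still splits completely over $\Fq$; hence it has at least one root $j_{k+1}\in\Fq$, extending the path. A straightforward induction on $k$ starting from the given edge $(j_0,j_1)$ shows $R_k(j_0,j_1)$ is nonempty for every $k\ge 1$.

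The one delicate point is the claim used in part~(i) that the unique ascending edge at each interior level is a \emph{simple} root of $\Phi_\ell$: without this, dividing by $(X-j_{s-1})$ would leave $j_{s-1}$ among the remaining roots and permit a non-descending (backtracking) continuation, breaking the induction. Both the uniqueness and the simplicity are standard consequences of the volcano structure already encoded in Proposition~\ref{prop:volcano} (see \cite{Fouquet:IsogenyVolcanoes,Kohel:thesis}), so I would invoke them by citation rather than re-derive them here.
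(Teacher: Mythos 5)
Your proof is correct and follows essentially the same route as the paper, which states the corollary as a direct summary of Proposition~\ref{prop:volcano}, the bound (\ref{eq:dbound}), the observation that a path beginning with a descending edge must keep descending and reach the floor within $d$ steps, and the $(\ell+1)$-regularity of supersingular components when $q>p$. Your extra care about the uniqueness/simplicity of the ascending edge is already encoded in Proposition~\ref{prop:volcano}(iii)--(iv) (exactly one edge leads upward), so nothing further needs to be cited.
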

 
\section{The algorithm}

We now present our algorithm, which, given an elliptic curve over a field of positive characteristic, returns ${\bf true}$ if $E$ is supersingular and ${\bf false}$ otherwise.
\pagebreak

\algstart{\textbf{2}}{Given an elliptic curve $E/F$ with $\operatorname{char} F = p > 0$, do the following:}
\algitem If $j(E)\notin\Fptwo$ then return {\bf false}.
\algitem If $p\le 3$ then return {\bf true} if $j(E)=0$ and {\bf false} otherwise.
\algitem Attempt to find three roots $j_1,j_2,j_3$ of $\Phi_2(j(E),X)$ in $\Fptwo$.\\
If $\Phi_2(j(E),X)$ does not have three roots in $\Fptwo$ then return {\bf false}.
\algitem Set $j'_i \leftarrow j(E)$ for $i=1,2,3$.
\algitem Let $m=\lfloor\log_2 p\rfloor+1$, and for $k=1$ to $m$:
\begin{enumerate}
\item Set $f_i(X)\leftarrow \Phi_2(j_i,X)/(X-j'_i)$ and set $j'_i\leftarrow j_i$, for $i=1,2,3$.
\item Attempt to find a root $j_i$ of $f_i(X)$ in $\Fptwo$, for $i=1,2,3$.\\
If any $f_i(X)$ does not have a root in $\Fptwo$ then return {\bf false}.
\end{enumerate}
\algitem Return {\bf true}.
\algend

After ruling out some trivial cases, the algorithm begins in step 3 by computing the outgoing edges from the vertex $j(E)$ in $G_2(\Fptwo)$,
using the modular polynomial
\begin{align*}
\Phi_2(X,Y) = X^3 &+ Y^3 - X^2Y^2 + 1488(X^2Y+Y^2X) - 162000(X^2+Y^2)\\
 &+ 40773375XY + 8748000000(X+Y) - 157464000000000.
\end{align*}
If the vertex $j(E)$ does not have degree 3 then $E$ must be ordinary and the algorithm terminates.
Otherwise, it attempts to extend each of the three edges $(j(E),j_i)$ to a path of length $m+1 \ge \log_2 \sqrt{4p^2} + 1$ in step~5.
If $E$ is ordinary than one of these attempts must fail, and otherwise $E$ must be supersingular, by Corollary \ref{corollary:paths}.

Thus the algorithm is correct.
We now analyze its complexity, considering two possible implementations, one probabilistic and one deterministic.
As in \S\ref{section:background}, we let $\M(n)$ denote the cost of multiplication and express our bounds in terms of $n=\log p$.

\subsection{Probabilistic complexity analysis}
The work of Algorithm~2 consists essentially of solving a cubic equation in step~3 and at most $3m=O(n)$ quadratic equations in step~5.
With a probabilistic root-finding algorithm \cite[Alg~14.5]{Gathen:ComputerAlgebra}, we expect to use $O(n)$ operations in $\Fptwo$ for each equation,
yielding a total expected running time of $O(n^2)$ operations in $\Fptwo$, using storage for $O(1)$ elements of~$\Fptwo$.
This gives an expected running time of $O(n^2\M(n))= O(n^3\log n\llog n)$ using $O(n)$ space.
The output of the algorithm is not affected by any of the random choices that are made (it is always correct), thus we have a Las Vegas algorithm.

\begin{proposition}\label{prop:complexityLV}
Algorithm~2 can be implemented as a Las Vegas algorithm with an expected running time of $O(n^3\log n\llog n)$, using $O(n)$ space.
\end{proposition}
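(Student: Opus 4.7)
The plan is to tally the work performed by Algorithm~2, expressed first as a count of operations in $\Fptwo$, and then convert to bit complexity via $\M(n)$.

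The algorithm performs exactly one cubic root extraction in step~3 and, in each of the $m=\lfloor\log_2 p\rfloor+1 = O(n)$ iterations of step~5, three quadratic root extractions, for a total of $O(n)$ calls to a univariate root-finder over $\Fptwo$ on polynomials of degree at most three. I would use the probabilistic equal-degree splitting of \cite[Alg.~14.5]{Gathen:ComputerAlgebra}, whose bottleneck on a polynomial $f\in\Fptwo[X]$ of constant degree is the computation of $X^{(p^2-1)/2}\bmod f(X)$ by repeated squaring; this uses $O(n)$ multiplications in $\Fptwo[X]/(f)$, and since $\deg f\le 3$ each such multiplication costs only $O(1)$ operations in $\Fptwo$. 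Thus each root-finding call costs an expected $O(n)$ operations in $\Fptwo$, giving $O(n^2)$ operations in $\Fptwo$ across the entire algorithm.

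The remaining work in step~5 --- substituting $j_i$ into the fixed polynomial $\Phi_2$, dividing by the linear factor $X-j_i'$, and updating the triples $(j_i,j_i',f_i)$ --- contributes only $O(1)$ operations in $\Fptwo$ per iteration, hence $O(n)$ overall, which is absorbed by the root-finding cost. Since each operation in $\Fptwo$ reduces to $O(1)$ operations on integers of $O(n)$ bits at a cost of $O(\M(n))$ each, the total expected bit-complexity is $O(n^2\M(n)) = O(n^3\log n\llog n)$.

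For the space bound, one need only store, at any moment, the three current pairs $(j_i,j_i')$, the degree-two quotients $f_i$, and scratch space for one root-finding call on a polynomial of degree $O(1)$ --- a total of $O(1)$ elements of $\Fptwo$, and hence $O(n)$ bits. Finally, by the correctness argument following the statement of the algorithm, the output depends only on $E$ and not on the random choices made inside the root-finder, so the procedure is genuinely Las Vegas. The only mildly subtle step in this plan is verifying that the expected cost of the root-finder really is $O(n)$ operations in $\Fptwo$ per constant-degree input; everything else is routine accounting.
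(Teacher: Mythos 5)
Your proposal is correct and follows essentially the same route as the paper: count the $O(n)$ constant-degree root-finding calls, bound each at an expected $O(n)$ operations in $\Fptwo$ via the probabilistic root-finder of \cite[Alg.~14.5]{Gathen:ComputerAlgebra}, convert $O(n^2)$ field operations to $O(n^2\M(n))=O(n^3\log n\llog n)$ bit operations, and note that the output is independent of the random choices. The paper's own argument is just a terser version of this same accounting, so there is nothing to add.
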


\subsection{Deterministic complexity analysis}
We now consider how we may obtain a deterministic algorithm, given some additional information.
First, we note that the choice of the root $j_i$ in step~5 can be fixed by ordering $\Fptwo$ with respect to some basis.
Second, we may apply the quadratic formula and Cardano's method (valid over any field of characteristic not 2 or 3), to solve the equations arising in steps~3 and~5 by radicals.
To find the roots of a quadratic or cubic polynomial that splits completely in $\Fptwo[X]$, it suffices to compute square roots and cube roots in $\Fptwo$.
 
For any prime $r$, computing an $r$th root in a finite field $\Fq$ can be reduced to an exponentiation and a (possibly trivial) discrete logarithm computation in the $r$-Sylow subgroup of $\Fq^*$.
For $r=2$ this is the Tonelli-Shanks algorithm~\cite{Tonelli:SquareRoot,Shanks:FiveAlgorithms}, and the generalization to $r>2$ is due to Adleman, Manders, and Miller~\cite{Adleman:Roots}.
For the discrete logarithm computation we require a generator $\gamma$ for the $r$-Sylow subgroup~$H_r$ of~$\Fq^*$ (which is necessarily cyclic).
Using the algorithm in \cite{Sutherland:AbelianpGroups} we can compute discrete logarithms in $H_r$ using $O(n\log n / \llog n)$ operations in~$\Fq$, assuming $r$ and the degree of $\Fq$ are fixed.
This yields a bit-complexity of $O(\M(n)n\log n/\llog n) = O(n^2\logpow{2}{n})$, which dominates the cost of exponentiation.

When $H_r$ is not trivial, any element $\alpha$ of $\Fq$ that is not an $r$th-power residue yields a generator for $H_r$: simply let $\gamma = \alpha^{(q-1)/s}$, where $s=r^{\nu_r(q-1)}$.
This yields the following proposition.

\begin{proposition}\label{prop:complexityDet}
Algorithm~2 can be implemented as a deterministic algorithm that runs in $O(n^3\logpow{2}{n})$ time using $O(n)$ space, given a quadratic non-residue and a cubic non-residue in $\Fptwo$.
\end{proposition}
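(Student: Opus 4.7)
The plan is to show that every arithmetic sub-task in Algorithm 2 can be carried out deterministically within the stated time and space bounds, using the two non-residues to bootstrap root extraction. First I would note that the algorithm calls for finding roots of one cubic (step 3) and of $O(n)$ quadratics (step 5) in $\Fptwo$. Since $\Fptwo$ has characteristic $p>3$, both Cardano's formula and the quadratic formula apply, so each root-finding task reduces to a constant number of field operations together with one square root (for quadratics) or one square root and one cube root (for the cubic). To keep the output deterministic I would fix a total order on $\Fptwo$ coming from a choice of basis and always select the least root; this also fixes the sign of each radical.

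Next I would establish the cost of a single $r$-th root in $\Fptwo$ for $r\in\{2,3\}$. Following Tonelli--Shanks and Adleman--Manders--Miller, extracting an $r$-th root of $\beta\in\Fptwo$ reduces to (a) one exponentiation of $\beta$, and (b) one discrete logarithm in the $r$-Sylow subgroup $H_r$ of $\Fptwo^*$. The given $r$-th power non-residue $\alpha\in\Fptwo$ produces a generator $\gamma=\alpha^{(q-1)/s}$ of $H_r$, with $s=r^{\nu_r(q-1)}$, in a single exponentiation. For the discrete log in $H_r$ I would invoke the algorithm of \cite{Sutherland:AbelianpGroups}, which uses $O(n\log n/\llog n)$ operations in $\Fptwo$ for fixed $r$ and fixed degree, so its bit complexity is $O(\M(n)\,n\log n/\llog n)=O(n^2\logpow{2}{n})$. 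The exponentiation step costs $O(n\M(n))=\tilde{O}(n^2)$, which is dominated by the discrete log. Hence each square root and each cube root in $\Fptwo$ costs $O(n^2\logpow{2}{n})$ bit operations.

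To finish, I would add up the costs over the whole algorithm. Step 3 performs $O(1)$ root extractions. Step 5 runs for $m=O(n)$ iterations, each solving three quadratics and therefore using $O(1)$ square roots. The total number of $\Fptwo$-root extractions is $O(n)$, giving total time $O(n)\cdot O(n^2\logpow{2}{n})=O(n^3\logpow{2}{n})$; all other arithmetic (evaluating $\Phi_2$, dividing off the previous root, applying Cardano) is subsumed in this bound. For space, we need only store the current triple $(j_i,j'_i)$ and a constant number of $\Fptwo$-elements used during root extraction, each of bit-length $O(n)$, yielding $O(n)$ space overall.

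The main obstacle is the discrete logarithm bound underlying each root extraction: getting $O(n\log n/\llog n)$ field operations rather than the naive $O(\sqrt{r^{\nu_r(q-1)}})$ from baby-step giant-step is what pushes the per-root cost below $\tilde{O}(n^2)$ and ultimately yields the $O(n^3\logpow{2}{n})$ bound. Fortunately this is exactly the content of \cite{Sutherland:AbelianpGroups}, so the proof reduces to citing that result and verifying that the preconditions (knowing a generator $\gamma$, which comes from the supplied non-residue) are met.
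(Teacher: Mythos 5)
Your proposal is correct and follows essentially the same route as the paper: fixing root choices via an ordering of $\Fptwo$, reducing the cubic and quadratics to square and cube roots via Cardano and the quadratic formula, extracting $r$th roots by Tonelli--Shanks/Adleman--Manders--Miller with a generator of the $r$-Sylow subgroup obtained from the supplied non-residue, and bounding the dominant discrete-logarithm cost by $O(n\log n/\llog n)$ field operations via the cited algorithm. Your final summation over $O(n)$ root extractions and the $O(n)$ space accounting match the paper's argument exactly.
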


As noted earlier, we can efficiently obtain non-residues by sampling random elements.
Given a uniformly random $\alpha\in\Fq^*$, if we let $\gamma = \alpha^{(q-1)/s}$ as above, then~$\gamma$ generates $H_r$ if and only if $\gamma^{s/r}\ne 1$, which occurs with probability $1-1/r$.
Alternatively, if we are given a generator for $\Fptwo$ (the coefficients of $E$ may be specified in terms of such a generator), then we already have an element that is both a quadratic and a cubic non-residue.

We remark that while the complexity bound in Proposition \ref{prop:complexityDet} is slightly worse than the bound in Proposition \ref{prop:complexityLV}, in practice the deterministic approach is usually faster; the 2-Sylow and 3-Sylow subgroups of most finite fields are very small, and in this case the discrete logarithms used to compute square roots and cube roots take negligible time.

\subsection{Average case complexity}
The bounds given in Propositions \ref{prop:complexityLV} and \ref{prop:complexityDet} are worst-case complexity bounds.
We now consider the performance of Algorithm 2, on average, when given a random elliptic curve over $\Fptwo$.

\begin{proposition}\label{prop:complexityAve}
Given an elliptic curve whose $j$-invariant is uniformly distributed over $\Fptwo$, the expected running time of Algorithm 2 is $O(n^2\log n\llog n)$.
\end{proposition}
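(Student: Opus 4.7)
The plan is to argue that, on a uniformly random input $j(E)\in\Fptwo$, Algorithm~2 performs only $O(1)$ iterations of step~5 in expectation, so that the expected running time is dominated by a constant number of root extractions in $\Fptwo$.

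First I handle the exceptional inputs. Supersingular $j$-invariants in $\Fptwo$ number $O(p)$, so the event that $j(E)$ is supersingular (or equals $0$ or $1728$) has probability $O(1/p)$. On such inputs the algorithm finishes in worst-case time $O(n^3\log n\llog n)$ by Proposition~\ref{prop:complexityLV}, contributing only $o(1)$ to the expected running time. Hence I may condition on $j(E)$ being an ordinary invariant different from $0$ and $1728$.

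Next I bound the iteration count in terms of the position of $j(E)$ on its $2$-volcano $V$. Let $V_0,\dots,V_d$ be the levels of $V$ and suppose $j(E)\in V_i$. If $i=d$ then step~3 returns {\bf false} immediately, since floor vertices have degree at most~$2$. Otherwise, by Proposition~\ref{prop:volcano} at least two of the three edges from $j(E)$ are descending, and the path-extension rule prohibits backtracking, so any descending path must keep descending. Such a path must therefore reach $V_d$ after $d-i$ further steps and die there, so at most $d-i+1$ iterations of step~5 occur.

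I then estimate $\Exp[d-i]$ over the uniform distribution. Proposition~\ref{prop:volcano} gives $|V_{k+1}|=2|V_k|$ for $0<k<d$, together with $|V_1|\ge |V_0|$ (each $V_1$ vertex has a unique parent in $V_0$, and each $V_0$ vertex contributes at least one descending edge). Hence $|V|\le 2|V_d|$ and $|V_{d-k}|\le |V_d|/2^k$, so on any single volcano the fraction of vertices at distance $\ge k$ from the floor is $O(2^{-k})$. Summing across all ordinary $2$-volcanoes preserves this bound, and therefore $\Exp[d-i]=\sum_{k\ge 1}\Pr[d-i\ge k]=O(1)$.

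Finally I combine the pieces: each cubic in step~3 and each quadratic in step~5 can be solved in $O(n\M(n))=O(n^2\log n\llog n)$ expected time with a probabilistic root-finding algorithm, and only $1+O(1)$ such solves occur in expectation, yielding total expected time $O(n^2\log n\llog n)$. The main delicacy lies in verifying the level-size inequalities and the treatment of degenerate $2$-volcanoes (depth~$0$, short crater cycles, multi-edges near $j=0,1728$); all such cases only concentrate the distance-to-floor distribution further, so the upper bound is unaffected.
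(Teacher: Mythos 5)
Your argument follows the paper's proof essentially verbatim: discard the $O(1/p)$ supersingular cases, bound the work by $O(n)$ field operations per level descended so the expected cost is $O(n\Exp[d-i+1])$ operations, and use the level-size relations $\#V_1\ge\#V_0$ and $\#V_{i+1}=2\#V_i$ (for $0<i<d$) to conclude $\Exp[d-i+1]=O(1)$; you merely fill in the geometric-series computation that the paper leaves implicit. The only slip is the claim that a vertex at level $i=0<d$ has at least two descending edges---Proposition~\ref{prop:volcano}(ii) guarantees only $\ell-1=1$ of them for $\ell=2$---but a single descending edge already forces one of the three parallel paths to die within $d-i$ steps, so the bound is unaffected.
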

\begin{proof}
By \cite[Thm.~4.1]{Silverman:EllipticCurves1}, the proportion of supersingular $j$-invariants in $\Fptwo$ is $O(1/p)$.
It follows from Propositions~\ref{prop:complexityLV} and~\ref{prop:complexityDet} that these cases have a negligible impact on the expected running time.
Given an ordinary elliptic curve with $j$-invariant $j_0$, the running time of Algorithm 2 is $O(n\Exp[d-i+1])$ field operations, where $d$ is the depth of the 2-volcano in $G_2(\Fptwo)$ containing $j_0$, and $V_i$ is the level in which $j_0$ lies.
By Proposition~\ref{prop:volcano}, for $d > 0$ we have $\#V_0 \le \#V_1$ and $\#V_i = \#V_{i+1}/2$, for $0 < i < d$.
This implies that $\Exp[d-i+1]$ is $O(1)$, and the proposition follows.
\end{proof}

The bound in Proposition \ref{prop:complexityAve} applies to both the probabilistic and deterministic implementations of Algorithm 2 considered above.
With a probabilistic implementation, the expected running time of Algorithm 2 is within a constant factor of the running time of the Monte Carlo approach used in Algorithm 1,
and for almost all values of $p$ (those for which $p^2-1$ is not divisible by an unusually large power of 2 or 3), this is also true of the deterministic implementation.
Remarkably, this constant factor actually favors Algorithm~2, which identifies most ordinary curves even more quickly than Algorithm~1 (see Table~\ref{table:timings}).

\section{Computational results}\label{section:computations}

Table \ref{table:timings} compares the performance of Algorithm~2 with the implementation of the \textsc{IsSupersingular} function provided by the Magma computer algebra system.
The Magma implementation relies on two standard methods for distinguishing supersingular curves: it first performs a Monte Carlo test to quickly identify ordinary curves (as in Algorithm~1), and then applies the modular polynomial approach described in \S \ref{subsection:polytime}.
Our implementation was built on the GNU Multiple Precision Arithmetic Library (GMP) \cite{GMP}, which is also used by Magma.
All tests were run on a single core of an AMD Opteron 250 processor clocked at 2.4 GHz.

\begin{table}
\caption{Performance results (CPU times in milliseconds).}\label{table:timings}
\begin{tabular}{rrrrrrrrrrrrr}
&&\multicolumn{5}{c}{ordinary}&&\multicolumn{5}{c}{supersingular}\\
\cmidrule(r){3-7}\cmidrule(r){9-13}
&&\multicolumn{2}{c}{Magma}&&\multicolumn{2}{c}{Alg.~2}&&\multicolumn{2}{c}{Magma}&&\multicolumn{2}{c}{Alg.~2}\\
\cmidrule(r){3-4}\cmidrule(r){6-7}\cmidrule(r){9-10}\cmidrule(r){12-13}
$b$&& $\Fp$ & $\Fptwo$ && $\Fp$ & $\Fptwo$ &&  $\Fp$ & $\Fptwo$ && $\Fp$ & $\Fptwo$\\
\midrule
  64 &&    1 &   25 && 0.1 & 0.1 &&      226 &      770 &&    2 &    8\\
 128 &&    2 &   60 && 0.1 & 0.1 &&     2010 &     9950 &&    5 &   13\\
 192 &&    4 &   99 && 0.2 & 0.1 &&     8060 &    41800 &&    8 &   33\\
 256 &&    7 &  140 && 0.3 & 0.2 &&    21700 &   148000 &&   20 &   63\\
 320 &&   10 &  186 && 0.4 & 0.3 &&    41500 &   313000 &&   39 &  113\\
 384 &&   14 &  255 && 0.6 & 0.4 &&    95300 &   531000 &&   66 &  198\\
 448 &&   19 &  316 && 0.8 & 0.5 &&   152000 &   789000 &&  105 &  310\\
 512 &&   24 &  402 && 1.0 & 0.7 &&   316000 &  2280000 &&  164 &  488\\
 576 &&   30 &  484 && 1.3 & 0.9 &&   447000 &  3350000 &&  229 &  688\\
 640 &&   37 &  595 && 1.6 & 1.0 &&   644000 &  4790000 &&  316 &  945\\
 704 &&   46 &  706 && 2.0 & 1.2 &&   847000 &  6330000 &&  444 & 1330\\
 768 &&   55 &  790 && 2.4 & 1.5 &&  1370000 &  8340000 &&  591 & 1770\\
 832 &&   66 &  924 && 3.1 & 1.9 &&  1850000 & 10300000 &&  793 & 2410\\
 896 &&   78 & 1010 && 3.2 & 2.1 &&  2420000 & 12600000 && 1010 & 3040\\
 960 &&   87 & 1180 && 4.0 & 2.5 &&  3010000 & 16000000 && 1280 & 3820\\
1024 &&  101 & 1400 && 4.8 & 3.1 &&  5110000 & 35600000 && 1610 & 4880\\
\bottomrule
\end{tabular}
\end{table}

Each row of Table~\ref{table:timings} corresponds to a series of tests using a fixed bit-length~$b$.
For each value of $b$ we selected 5 random primes $p$ in the interval $[2^{b-1}, 2^b]$, and for each prime $p$ we generated 100 elliptic curves defined over $\Fp$ and 100 elliptic curves defined over $\Fptwo$, with uniformly distributed $j$-invariants.
As one might expect, all of these randomly generated curves were ordinary, and the average times to process these curves are listed in the ``ordinary" columns of Table~\ref{table:timings}.

To test performance on supersingular inputs, for each prime $p$ we constructed a supersingular curve over $\Fp$ using a variant of the CM method described in \cite{Broker:ConstructingSupersingularCurves}.
This involves picking a discriminant $D < 0$ with $\inkron{D}{p}=-1$ and $-D$ prime.
The Hilbert class polynomial $H_D(X)$ is then guaranteed to have an $\Fp$-rational root $j_0$, which is necessarily the $j$-invariant of a supersingular elliptic curve.
In order for this to be feasible, the discriminant $D$ cannot be too large; we used random discriminants in the interval $[2^{31},2^{32}]$, and computed $H_D(X)\bmod p$ using the algorithm in \cite{Sutherland:HilbertClassPolynomials}.

Over $\Fp$, the supersingular $j$-invariants obtained in this fashion are not uniformly distributed over the set of supersingular $j$-invariants in $\Fp$.
However, one expects the running times of both Algorithm 2 and the Magma implementation to be essentially independent of $D$, and this appears to be the case.
Over $\Fptwo$, we are able to obtain a nearly uniform distribution of supersingular $j$-invariants by performing a random walk on the graph $G_2(\Fptwo)$, starting from a vertex defined over $\Fp$ constructed using the CM method described above.
The supersingular component~$S$ of~$G_2(\Fptwo)$ is a Ramanujan graph \cite{Pizer:RamanujanGraphs}, and this implies that, starting from any vertex of $S$, a random walk of $O(n)$ steps on $S$ yields a nearly uniform distribution on its vertices.

\subsection{Discussion of results}
Table~1 indicates a significant performance advantage for Algorithm~2, both asymptotically (as predicted by the complexity analysis), and in terms of its constant factors.
It is worth noting that for both ordinary and supersingular inputs, the Magma implementation is substantially slower when working over $\Fptwo$ rather than $\Fp$.
This is to be expected, given the higher cost of finite field operations in $\Fptwo$.
By contrast, Algorithm~2 always works in $\Fptwo$, and one might suppose that its performance should be essentially independent of whether the input curves is defined over $\Fp$ or $\Fptwo$.
As can be seen in the timings in Table~1, this is not quite the case.  There are two reasons for this.

First, for a random elliptic curve $E/\Fptwo$, the probability that the vertex $j(E)$ has degree 3 in $G_2(\Fptwo)$ is, asymptotically, only $1/6$.
This means that in approximately 5/6 of the cases (whenever $\phi_{\ell,E}(X)$ does not split completely in $\Fptwo[X]$), Algorithm~2 terminates in step~3.
But if we restrict to $E/\Fp$, this happens in just $1/3$ of the cases (namely, whenever $\phi_{\ell,E}(X)$ is irreducible in $\Fp[X]$).
This difference explains why Algorithm~2 is actually somewhat faster, on average, when given a random curve over $\Fptwo$ rather than $\Fp$.

Second, our implementation relies on a practical optimization that can be applied whenever the input curve is defined over $\Fp$, and this optimization yields nearly a 3-fold speedup on supersingular inputs.
Rather than working entirely in the graph $G_2(\Fptwo)$, we begin by searching for a path in $G_2(\Fp)$ from $j(E)$ to a vertex of degree~1, walking three paths in parallel as usual.
Such a vertex $j_i$ will will be found within $O(1)$ steps, on average.
The vertex $j_i$ will necessarily have degree 3 in $G_2(\Fptwo)$, and if $E$ is ordinary, then the two edges that lead from $j_i$ to vertices that are not defined over $\Fp$ must be \emph{descending} edges.
It then suffices to extend just one path containing one of these edges, rather than walking three paths in parallel.

\section{Acknowledgements}

I am grateful to David Kohel for his feedback on an early draft of this paper, and for showing how to tighten the bound in Proposition~\ref{prop:modpoly}.

\bibliographystyle{amsplain}
\providecommand{\bysame}{\leavevmode\hbox to3em{\hrulefill}\thinspace}
\providecommand{\MR}{\relax\ifhmode\unskip\space\fi MR }
\providecommand{\MRhref}[2]{%
  \href{http://www.ams.org/mathscinet-getitem?mr=#1}{#2}
}
\providecommand{\href}[2]{#2}

\end{document}